\newtheorem*{THM}{Theorem}
\newtheorem{lem}{Lemma}[section]
\theoremstyle{definition}
\newtheorem*{ack}{Acknowledgement} 
\numberwithin{equation}{section}
\newcommand{\q}{\quad} 
\newcommand{\qq}{\qquad}
\newcommand{\C}{\mathbb{C}}
\newcommand{\R}{\mathbb{R}}
\newcommand{\Q}{\mathbb{Q}}
\newcommand{\N}{\mathbb{N}}
\newcommand{\CRX}{C_\R(X)}
\newcommand{\CRY}{C_\R(Y)}
\newcommand{\CX}{C(X)}
\newcommand{\CY}{C(Y)}
\newcommand{\ov}{\overline}
\newcommand{\unitx}{\bm{1}_X}
\newcommand{\unity}{\bm{1}_Y}
\newcommand{\V}[1]{\Vert#1\Vert}
\newcommand{\X}{X}
\newcommand{\Y}{Y}
\newcommand{\yz}{y_0}
\newcommand{\zerox}{\bm{0}_\X}
\newcommand{\zeroy}{\bm{0}_\Y}
\newcommand{\TO}{S}
\newcommand{\TOI}{\TO^{-1}}
\begin{document}

\title[Ring isomorphisms in norm]
{Ring isomorphisms in norm between
Banach algebras of continuous complex-valued functions}

\keywords{Banach--Stone theorem,
Gelfand--Kolmogoroff theorem, ring isomorphism}
\subjclass[2020]{46E15, 46E25, 46J10}

\author[T. Miura]{Takeshi Miura}
\address[T. Miura]
{Department of Mathematics,
Faculty of Science,
Niigata University,
Niigata 950-2181, Japan}
\email{miura@math.sc.niigata-u.ac.jp}

\author[T. Takahashi]{Taira Takahashi}
\address[T. Takahashi]
{Department of Mathematics,
Faculty of Science,
Niigata University,
Niigata 950-2181, Japan}
\email{f24a041e@mail.cc.niigata-u.ac.jp}

\begin{abstract}
Let $\X$ and $\Y$ be compact Hausdorff spaces,
and let $\CX$ and $\CY$ denote the commutative
Banach algebras of all continuous complex-valued
functions on $\X$ and $\Y$, respectively.
We study bijective maps $T$ from $\CX$ onto $\CY$
which preserve the ring structure in the norm
in the following sense:
\[
\V{T(f+g)}=\V{T(f)+T(g)},\q
\V{T(fg)}=\V{T(f)T(g)}
\qq(f,g\in\CX).
\]
Our main objective is to clarify whether such maps
must necessarily be induced by homeomorphisms
between the underlying spaces.
Under the additional assumption
that $T(\ov{f})=\ov{T(f)}$ for $f\in\CX$,
we prove that $T$ is a real-linear isometry.
As a consequence, we obtain a concrete representation
of such maps as weighted composition operators.
\end{abstract}

\maketitle

\section{Introduction and main result}\label{sect1}

Let $\X$ and $\Y$ be compact Hausdorff spaces.
We denote by $\CRX$ and $\CRY$ the commutative
Banach algebras of all continuous real-valued
functions on $\X$ and $\Y$, respectively.
The classical theorem by Banach \cite{bana}
and Stone \cite{ston} states that
every surjective, not necessarily linear,
isometry $T\colon\CRX\to\CRY$ with $T(0)=0$
is a weighted composition operator
induced by a homeomorphism
between $\X$ and $\Y$.
More precisely, the theorem shows that the metric structure
of $\CRX$ completely determines the underlying topological
structure of $\X$.

Gelfand and Kolmogoroff \cite{gelf} obtained
a reverse result.
Namely, algebraic structure of $\CRX$
determines the metric one of it.
More precisely, let $T\colon\CRX\to\CRY$ be
a bijective map that satisfies the following equalities
for all $f,g\in\CRX$:
\[
T(f+g)=T(f)+T(g),\qq
T(fg)=T(f)T(g).
\]
In other words, $T$ is a ring isomorphism
between $\CRX$ and $\CRY$.
Then there exist a homeomorphism
$\tau\colon\Y\to\X$ such that
$T(f)=f\circ\tau$ for all $f\in\CRX$.
Therefore, $T$ preserves the metric structure
of $\CRX$ and $\CRY$.

Dong, Lin and Zheng \cite{dong} generalized
the Gelfand--Kolmogoroff theorem for
\textit{ring isomorphisms in norm}
$T\colon\CRX\to\CRY$
in the following sense:
\[
\V{T(f+g)}=\V{T(f)+T(g)},\q
\V{T(fg)}=\V{T(f)T(g)}
\qq(f,g\in\CRX),
\]
where $\V{\cdot}$ denotes the supremum
norm.

It is natural to consider ring isomorphisms
in norm between $\CX$ and $\CY$,
the commutative Banach algebras
of all continuous complex-valued functions
on $\X$ and $\Y$, respectively.
On the other hand, it is well-known that
even ring isomorphisms on the complex
number field $\C$ behave quite wild
(see. e.g. \cite{kest}):
For example, Charnow \cite{char} proved
that the cardinal number of the set
of all discontinuous ring automorphisms
on $\C$ is $2^{\mathfrak c}$,
where $\mathfrak c$ denotes the cardinality
of the continuum.
Thus, we need an additional assumption
for ring isomorphisms in norm to characterize
them.
The main result of this paper states that
ring isomorphisms in norm $T$ between
$\CX$ and $\CY$ is a real-linear weighted
composition operator induced by
a homeomorphism between $\X$ and $\Y$
if $T$ preserves the complex conjugate.
The following is the main result of this paper.

\begin{THM}
Let $T \colon\CX\to\CY$ be a bijective map
that satisfies the following equalities
for all $f,g\in\CX$:
\begin{align}
\V{T(f+g)}
&=
\V{T(f)+T(g)},
\label{i1}\\
\V{T(fg)}
&=
\V{T(f)T(g)},
\label{i2}\\
T(\ov{f})
&=
\ov{T(f)},
\label{i3}
\end{align}
where $\ov{g}$ denotes
the complex conjugate of $g$.
Then there exist a $u\in\CRY$ with
$u(\Y)\subset\{\pm1\}$,
a homeomorphism $\tau\colon\Y\to\X$
and a closed and open subset $K$
in $\Y$ such that 
the following equality holds
for all $f\in\CX$ and $y\in\Y$:
\[
T(f)(y)=
\begin{cases}
u(y)f(\tau(y))&y\in K,\\[1mm]
u(y)\ov{f(\tau(y))}&y\in\Y\setminus K.
\end{cases}
\]
\end{THM}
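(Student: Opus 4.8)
The plan is to reduce everything to the real-valued theory and then upgrade the norm identities to genuine real-linearity by a metric argument. First I would observe that hypothesis \eqref{i3} forces $T$ to respect the real subalgebra: if $f=\ov f$ then $T(f)=T(\ov f)=\ov{T(f)}$, so $T$ carries $\CRX$ into $\CRY$, and applying the same reasoning to $T^{-1}$ (which inherits \eqref{i3}) shows that the restriction $T|_{\CRX}\colon\CRX\to\CRY$ is a bijection satisfying \eqref{i1}--\eqref{i2}, i.e. a ring isomorphism in norm. The theorem of Dong, Lin and Zheng \cite{dong}, together with \cite{bana,ston}, then supplies a continuous $u\colon\Y\to\{\pm1\}$ and a homeomorphism $\tau\colon\Y\to\X$ with $T(k)=u\,(k\circ\tau)$ for every real $k$; in particular $T(\unit)=u$ and $\V{T(k)}=\V k$ on $\CRX$. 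A short computation using \eqref{i2} ($\V{T(0)}=\V{T(0)}^2$) and \eqref{i1} ($\V{T(0)}=2\V{T(0)}$) gives $T(0)=0$, and the same additive-in-norm identity yields $T(-f)=-T(f)$.

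Next I would prove that $T$ is a surjective isometry. For arbitrary $f\in\CX$ the product $f\ov f=|f|^2$ is real, so by \eqref{i2}, \eqref{i3} and the real case,
\[
\V{T(f)}^2=\V{\,|T(f)|^2\,}=\V{T(f)\,\ov{T(f)}}=\V{T(f)T(\ov f)}=\V{T(f\ov f)}=\V{f\ov f}=\V{f}^2,
\]
hence $\V{T(f)}=\V f$. Combining this with \eqref{i1} and $T(-g)=-T(g)$ gives $\V{T(f)-T(g)}=\V{T(f-g)}=\V{f-g}$, so $T$ is a surjective isometry with $T(0)=0$. By the Mazur--Ulam theorem $T$ is therefore real-linear. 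This is the conceptual heart of the argument: rather than deriving additivity directly from the norm identities, I extract genuine linearity from the isometry property.

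Then I would determine $T$ on the imaginary axis. Real-linearity and \eqref{i3} give $\ov{T(i\unit)}=T(\ov{i\unit})=T(-i\unit)=-T(i\unit)$, so $w:=T(i\unit)$ is purely imaginary; testing $\V{T(i\unit)T(k)}=\V{T(i\,k)}=\V k$ against all real $k$ and using Urysohn peaking functions forces $|w|\equiv1$, whence $w(\Y)\subset\{\pm i\}$. Since $wu$ is continuous with values in $\{\pm i\}$, the set $K:=\{y:w(y)u(y)=i\}$ is closed and open, with $w=iu$ on $K$ and $w=-iu$ on $\Y\setminus K$. For real $h$ one checks that $T(ih)$ is purely imaginary, so $T(ih)=i\rho(h)$ with $\rho\colon\CRX\to\CRY$ a real-linear surjective isometry; applying Banach--Stone \cite{bana,ston} to $\rho$ and then feeding the outcome into the multiplicative-in-norm identity $\V{\rho(hg)}=\V{\rho(h)\,(g\circ\tau)}$ (obtained from \eqref{i2} applied to $(ih)g$) forces the homeomorphism attached to $\rho$ to coincide with $\tau$ and its weight to equal $\rho(\unit)=-iw$. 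Hence $T(ih)=w\,(h\circ\tau)$. I expect this matching step---transporting the information in \eqref{i2} across from the real to the imaginary part so that the two homeomorphisms agree---to be the main obstacle.

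Finally I would assemble the formula. Writing $f=\re f+i\,\im f$ and using real-linearity, $T(f)=T(\re f)+T(i\,\im f)=u\,(\re f\circ\tau)+w\,(\im f\circ\tau)$. On $K$, where $w=iu$, this equals $u\,[(\re f\circ\tau)+i(\im f\circ\tau)]=u\,(f\circ\tau)$, and on $\Y\setminus K$, where $w=-iu$, it equals $u\,[(\re f\circ\tau)-i(\im f\circ\tau)]=u\,\ov{f(\tau(\cdot))}$, which is exactly the claimed representation.
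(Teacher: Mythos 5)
Your proposal is correct in substance but follows a genuinely different route from the paper. The paper never invokes the Dong--Lin--Zheng theorem as a tool: it derives additivity of $T$ directly from \eqref{i1} via Tabor's theorem, normalizes to the unital map $\TO=T(\unitx)T$, proves by hand that $\TO$ preserves positive functions and satisfies $\V{\TO(f)}\le\V{f}$, upgrades this to an isometry via the open mapping theorem and the iteration $\V{f}\le M^{1/2^n}\V{\TO(f)}$, and then applies Ellis's characterization of surjective real-linear isometries between function algebras, which delivers the clopen set $K$ and the conjugation in a single stroke. You instead restrict $T$ to $\CRX$ (correctly: $T^{-1}$ inherits \eqref{i3}, so the restriction is onto $\CRY$), import the real-valued representation $T(k)=u\,(k\circ\tau)$ from \cite{dong}, deduce the isometry property from $\V{T(f)}^2=\V{T(f)T(\ov f)}=\V{T(f\ov f)}=\V{f}^2$, obtain real-linearity from Mazur--Ulam rather than from Tabor's theorem, and then apply Banach--Stone a second time to $\rho(h)=-iT(ih)$, after which you must show the two homeomorphisms agree. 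That matching step does work (if $\sigma(y_0)\ne\tau(y_0)$, take $h,g\ge0$ with disjoint supports and $h(\sigma(y_0))=g(\tau(y_0))=1$; then $\V{\rho(hg)}=0$ while $\V{\rho(h)(g\circ\tau)}\ge1$), and the surjectivity of $\rho$ onto $\CRY$ needed for Banach--Stone follows from injectivity of $T$ together with \eqref{i3}; both points should be recorded explicitly. As for what each approach buys: yours is shorter and replaces Ellis's theorem by the more classical Mazur--Ulam and Banach--Stone theorems, but it stands or falls with the precise form of the Dong--Lin--Zheng conclusion, which you use as a black box; the paper's argument is self-contained on the real side, establishes the isometry property without any representation theorem, and avoids the homeomorphism-matching step entirely because Ellis's theorem treats the complex structure all at once.
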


Our proof consists of some steps.
The main task is to show that
ring isomorphism in norm is a
real-linear isometry.
This step is essential, since it allows us to reduce the problem
to the classical setting of real-linear isometries.
Once we obtain the property,
we can apply the characterization
for surjective real-linear isometries
to conclude that the map is
represented by a weighted composition
operator.

The paper is organized as follows.
In Section~\ref{sect2}, we establish several auxiliary lemmas
concerning ring isomorphisms in norm.
These results are then combined to prove the main theorem
in Section~\ref{sect2}.

\section{Proof of main theorem}\label{sect2}

In this section, we prove the main theorem stated in Section~\ref{sect1}.
The proof is divided into several steps, clarifying
the metric properties of the given map.

Let $G$ be a group and $B$ a Banach space.
Tabor \cite[Corollary~1]{tabo} proved that
if $U\colon G\to B$
is a surjective map that satisfies
$\V{U(f+g)}=\V{U(f)+U(g)}$
for all $f,g\in G$,
then $U$ is additive.
Applying this result,
we have that a ring isomorphism in norm
$T\colon\CX\to\CY$ satisfies
the following:
\[
T(f+g)=T(f)+T(g)
\qq(f,g\in\CX).
\]
Let $\Q$ be the set of all 
rational numbers.
By induction, one verifies that
\[
T(rf)=rT(f)
\qq(r\in\Q,\,f\in\CX).
\]
We use these two properties frequently
in the rest of the paper.

In the rest of this paper, we denote by
$\unitx$ and $\unity$ the constant functions
on $\X$ and $\Y$, respectively, that take
only the value $1$.

As a first step, we prove that the mapping $T$ maps
real-valued functions on $\X$ to those on $\Y$.

\begin{lem}\label{lem2.1}
The mapping $T$ satisfies
$T(\CRX)\subset\CRY$.
\end{lem}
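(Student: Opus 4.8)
The plan is to derive this directly from the conjugation-preserving hypothesis \eqref{i3}. The key observation is the standard characterization of real-valued functions: a continuous complex-valued function $g\in\CY$ belongs to $\CRY$ if and only if $g=\ov{g}$, that is, precisely when $g$ is a fixed point of complex conjugation. Thus it suffices to show that $T(f)$ is conjugation-invariant whenever $f$ is.

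Accordingly, I would fix an arbitrary $f\in\CRX$. Since $f$ is real-valued, we have $f=\ov{f}$ as an element of $\CX$. Applying \eqref{i3} then yields
\[
T(f)=T(\ov{f})=\ov{T(f)},
\]
so $T(f)$ equals its own complex conjugate. Hence $T(f)$ is real-valued, and being an element of $\CY$ it is continuous, so $T(f)\in\CRY$. As $f$ was arbitrary, this gives $T(\CRX)\subset\CRY$.

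I do not expect any genuine obstacle here: the entire content is supplied by hypothesis \eqref{i3}, and the argument is essentially a one-line application of it. Notably, neither the additivity nor the rational homogeneity of $T$ recorded earlier is needed for this particular claim. The role of this lemma is preparatory—it isolates the real parts of the two algebras so that the later, substantially harder steps (promoting the established additivity and rational homogeneity to full real-linearity, and then to an isometry, at which point the Banach--Stone and Gelfand--Kolmogoroff theory for real scalars can be invoked) can be carried out on $\CRX$ and $\CRY$. The genuine difficulty of the paper lies entirely in those subsequent steps, not in this initial reduction.
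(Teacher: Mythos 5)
Your proof is correct and is exactly the argument the paper gives: fix $f\in\CRX$, use $f=\ov{f}$ and hypothesis \eqref{i3} to get $T(f)=T(\ov{f})=\ov{T(f)}$, and conclude $T(f)\in\CRY$. No differences worth noting.
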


\begin{proof}
Fix an arbitrary $f\in\CRX$.
We have $T(f)=T(\ov{f})=\ov{T(f)}$
by \eqref{i3}.
This proves that $T(f)\in\CRY$.
Therefore we conclude that
$T(\CRX)\subset\CRY$.
\end{proof}

We next investigate further structural properties of $T$
derived from its additivity.
The next lemma states that $T$ maps
the constant function $\unitx$ to a unimodular
function on $\Y$.
This simplifies the rest of our argument.

\begin{lem}\label{lem2.2}
The range $T(\unitx)(\Y)$ of $T(\unitx)$ is
contained in the two point set $\{\pm1\}$.
\end{lem}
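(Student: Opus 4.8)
The plan is to exploit that $\unitx$ is the multiplicative unit of $\CX$, so that $f\unitx=f$ for every $f\in\CX$, and to combine this with the multiplicativity in norm \eqref{i2}. Writing $v=T(\unitx)$ and applying \eqref{i2} to the pair $f,\unitx$, we obtain $\V{T(f)}=\V{T(f)\,v}$ for every $f\in\CX$. Since $T$ is surjective, the image $T(f)$ runs over all of $\CY$ as $f$ runs over $\CX$, whence
\[
\V{h}=\V{h\,v}\qquad(h\in\CY).\tag{$\ast$}
\]
This single identity will drive the whole argument. I would first record that $v$ is real-valued: by Lemma~\ref{lem2.1} (equivalently, directly from \eqref{i3} applied to the self-conjugate function $\unitx$) we have $v\in\CRY$.

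Taking $h=\unity$ in $(\ast)$ gives $\V{v}=\V{\unity}=1$, so that $|v(y)|\le1$ for all $y\in\Y$. The only delicate point is to promote this to the pointwise statement $|v(y)|=1$ everywhere, and I would argue by contradiction. Suppose $|v(y_0)|<1$ for some $y_0\in\Y$. By continuity of $v$ there are an open neighborhood $U$ of $y_0$ and a constant $\delta<1$ with $|v|\le\delta$ on $U$. As $\Y$ is compact Hausdorff, hence normal, Urysohn's lemma furnishes $h\in\CRY$ with $h(y_0)=1$, $0\le h\le1$ and $h\equiv0$ on $\Y\setminus U$. Then $|h(y)v(y)|\le\delta$ for $y\in U$ and $h(y)v(y)=0$ otherwise, so $\V{h\,v}\le\delta<1=\V{h}$, contradicting $(\ast)$. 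Hence $|v(y)|=1$ for every $y\in\Y$, and since $v$ is real-valued this forces $v(y)\in\{\pm1\}$; that is, $T(\unitx)(\Y)\subset\{\pm1\}$.

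The main obstacle I anticipate lies not in any single computation but in recognizing that multiplicativity in norm, though a priori only a statement about norms, can be localized. The identity $(\ast)$ converts a global norm equality into pointwise control of $|v|$ once it is tested against the localized bumps provided by Urysohn's lemma. Establishing $(\ast)$ for all of $\CY$---rather than merely for functions of the form $T(f)$---is precisely where the surjectivity of $T$ enters, and this is the conceptual heart of the proof.
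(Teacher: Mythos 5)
Your proof is correct and follows essentially the same route as the paper: both arguments reduce to the identity $\V{h}=\V{h\,T(\unitx)}$ via surjectivity and \eqref{i2}, and both derive the pointwise bound by testing against a Urysohn bump supported where $|T(\unitx)|<1$. The only (cosmetic) difference is that you obtain $\V{T(\unitx)}=1$ by taking $h=\unity$ in your identity $(\ast)$, whereas the paper uses $\V{T(\unitx)}=\V{T(\unitx)}^2$ together with additivity and injectivity to exclude the value $0$.
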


\begin{proof}
First, we prove that $\V{T(\unitx)}=1$.
We derive from \eqref{i2} that
\[
\V{T(\unitx)}=\V{T((\unitx)^2)}
=\V{T(\unitx)^2}
=\V{T(\unitx)}^2.
\]
Hence $\V{T(\unitx)}=0$ or $\V{T(\unitx)}=1$.
Because $T$ is additive, we observe that
$T$ maps the zero function $\zerox$
on $\X$ to $\zeroy$.
Therefore, we conclude that $\V{T(\unitx)}=1$
since $T$ is injective.

Next, we prove that
$|T(\unitx)(y)|=1$ for all $y\in\Y$.
Suppose, on the contrary, that
there exists a $\yz\in\Y$ such that
$|T(\unitx)(\yz)|\neq1$.
Then we have $|T(\unitx)(\yz)|<1$.
By the continuity of the function
$T(\unitx)$, there exists an open
neighborhood $O$ of $\yz$ in $\Y$
such that $|T(\unitx)|<1$ on $O$.
Applying the Urysohn's lemma,
there exists a $u_0\in\CRY$ such that
$u_0(\Y)\subset[0,1]$, $u_0(\yz)=1$
and $u_0=0$ on $\Y\setminus O$.
Because $T$ is bijective,
there exists a $g_0\in\CX$ such that
$T(g_0)=u_0$.
It follows from equality \eqref{i2} that
\[
1=\V{u_0}
=\V{T(g_0)}
=\V{T(g_0)T(\unitx)}
=\V{u_0T(\unitx)}.
\]
On the other hand,
we observe that
$|u_0(y)T(\unitx)(y)|<1$
by the choice of $O$ and $u_0$.
Consequently, we obtain
$\V{u_0T(\unitx)}<1$,
which is in contradiction
with the last equalities.
Therefore, we conclude that
$|T(\unitx)(y)|=1$ for all $y\in\Y$.
By Lemma~\ref{lem2.1},
we have $T(\unitx)\in\CRX$,
which shows that
$T(\unitx)(Y)\subset\{\pm1\}$.
\end{proof}

The next step is to normalize the map $T$
by eliminating $T(\unitx)$.
We define a map $\TO\colon\CX\to\CY$
as follows:
\[
\TO(f)=T(\unitx)T(f)
\qq(f\in\CX).
\]
By Lemma~\ref{lem2.2},
one verifies that $\TO$ is a bijective,
additive map
that satisfies equality
\eqref{i2} for all $f,g\in\CX$.
Moreover, we observe that
$\TO(\unitx)=\unity$,
$\TO(rf)=r\TO(f)$
for $r\in\Q$ and $f\in\CX$
and $\TO(\ov{f})=\ov{\TO(f)}$
for all $f\in\CX$.
This normalization allows us to focus on a unital map,
which simplifies the subsequent arguments.

We next prove that $\TO$ preserves
the positive functions.

\begin{lem}\label{lem2.3}
For each $f\in C(X)$ that satisfies $f\geq0$ on $X$,
we have $\TO(f)\geq0$ on $Y$.
\end{lem}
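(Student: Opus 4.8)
The plan is to exploit the square root of $f$ together with the difference-of-squares factorisation: this is the device that turns the purely \emph{additive} expression $t^2\unitx-f$ into a \emph{product}, so that the multiplicative-in-norm hypothesis \eqref{i2} can be brought to bear. First I would set $g:=\sqrt f$; since $f\geq0$ we have $g\in\CRX$, $g\geq0$ and $f=g^2$. Writing $u:=\TO(f)$ and $v:=\TO(g)$, the same reasoning as in Lemma~\ref{lem2.1} (using $\TO(\ov h)=\ov{\TO(h)}$) shows $u,v\in\CRY$, since $g,f$ are real-valued. Now fix a rational $t$ and use the factorisation $t^2\unitx-f=(t\unitx-g)(t\unitx+g)$. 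On the one hand, additivity, $\Q$-homogeneity and $\TO(\unitx)=\unity$ give $\TO(t^2\unitx-f)=t^2\unity-u$; on the other hand $\TO(t\unitx-g)=t\unity-v$ and $\TO(t\unitx+g)=t\unity+v$. Applying \eqref{i2} to the two factors and expanding $(t\unity-v)(t\unity+v)=t^2\unity-v^2$ yields
\[
\V{t^2\unity-u}=\V{t^2\unity-v^2}\qquad(t\in\Q).
\]

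The second step is to send $t$ to be large. Choosing a rational $t$ with $t^2>\max(\V{u},\V{v^2})$, both functions $t^2\unity-u$ and $t^2\unity-v^2$ are strictly positive on $\Y$, so their supremum norms collapse to $t^2-\min_\Y u$ and $t^2-\min_\Y v^2$, respectively. Cancelling the common $t^2$ leaves $\min_\Y u=\min_\Y v^2$. Since $v$ is real-valued we have $v^2\geq0$, hence $\min_\Y u=\min_\Y v^2\geq0$, which is precisely $\TO(f)=u\geq0$ on $\Y$, as claimed.

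I expect the conceptual crux to be the factorisation itself: it is the one maneuver that lets an additive quantity be inserted into \eqref{i2}, and the normalisation $\TO(\unitx)=\unity$ is exactly what keeps the additive constants equal to $t^2$ on both sides after passing through $\TO$. The only genuinely delicate technical point is the identity $\V{t^2\unity-w}=t^2-\min_\Y w$, which is valid only once $t^2$ exceeds $\V{w}$ so that $t^2\unity-w$ has constant sign; picking a single sufficiently large rational $t$ handles this uniformly for $w\in\{u,v^2\}$ and already forces the exact equality of minima, so no limiting argument is needed.
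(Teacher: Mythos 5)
Your proof is correct and rests on the same key maneuver as the paper's: writing $f=g^2$ with $g=\sqrt f$ and exploiting the difference-of-squares factorisation so that the multiplicative hypothesis \eqref{i2} can be applied to an additive expression. The only difference is in the finish: the paper rescales, setting $g=\sqrt f/k$ so that $\V{\TO(g)}\le 1$, and concludes from the inequality $|1-\TO(g^2)(y)|\le\V{\unity-\TO(g)^2}\le 1$, whereas you keep $g$ unscaled and instead pick a single large rational $t$ and compare the minima of $t^2\unity-u$ and $t^2\unity-v^2$; both finishes are valid.
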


\begin{proof}
Let $f\in C(X)$ be such that $f\geq0$ on $X$.
Then the square root $\sqrt{f}$ of $f$
is a well-defined continuous real-valued
function on $X$.
There exists a $k\in\N$ such that $\|\TO(\sqrt{f})\|\leq k$.
We set $g=\sqrt{f}/k$ for simplicity in notation.
Because $\TO$ is additive,
we obtain $\V{\TO(g)}\leq1$.
We derive from \eqref{i1}
with $\TO(\unitx)=\unity$ that
\[
\V{\unity-\TO(g^2)}
=\V{\TO(\unitx)-\TO(g^2)}
=\V{\TO(\unitx-g^2)}.
\]
By applying equality \eqref{i2},
we obtain
\[
\V{\TO(\unitx-g^2)}
=\V{\TO(\unitx-g)\TO(\unitx+g)}
=\V{(\unity-\TO(g))(\unity+\TO(g))}
=\V{\unity-\TO(g)^2},
\]
where we have used that $\TO$ is a unital additive map.
Combining the previous equalities shows that
\[
\V{\unity-\TO(g^2)}
=\V{\unity-\TO(g)^2}.
\]
By Lemma~\ref{lem2.2},
$\TO(g)$ and $\TO(g^2)$ are both real-valued functions on $Y$
with $\V{\TO(g)}\leq1$.
It follows that $0\leq \TO(g)^2(y)\leq1$ for all $y\in Y$,
thus $\V{\unity-\TO(g)^2}\leq1$.
We derive from the last equality that
\[
|1-\TO(g^2)(y)|
\leq
\V{\unity-\TO(g^2)}
=\V{\unity-\TO(g)^2}\leq1
\]
for all $y\in Y$.
Since $\TO(g^2)$ is a real-valued function,
we conclude that $\TO(g^2)\geq0$ on $Y$.
Therefore, we conclude that
$\TO(f)=\TO(k^2g^2)
=k^2\TO(g^2)\geq0$ on $Y$.
\end{proof}

The next lemma shows that $\TO$ is bounded.
This will play an important role to prove that
$\TO$ is a real-linear isometry.

\begin{lem}\label{lem2.4}
The inequality $\V{\TO(f)}\leq\V{f}$
holds for all $f\in\CX$.
\end{lem}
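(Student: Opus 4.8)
The plan is to first establish the inequality for real-valued functions, using the positivity preserved by Lemma~\ref{lem2.3}, and then to reduce the general complex case to the real one via the identity $|f|^2=f\ov f$ together with \eqref{i2}.

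First I would treat a real-valued $f\in\CRX$ with $\V f\le1$. Such an $f$ satisfies $\unitx-f\ge0$ and $\unitx+f\ge0$ on $\X$, and since $\TO$ is unital and additive we have $\TO(\unitx\pm f)=\unity\pm\TO(f)$. By Lemma~\ref{lem2.3} both of these functions are nonnegative on $\Y$, so $-\unity\le\TO(f)\le\unity$; as $\TO(f)$ is real-valued (by the conjugation property of $\TO$, exactly as in Lemma~\ref{lem2.1}), this gives $\V{\TO(f)}\le1$. For an arbitrary nonzero real-valued $f$ with $\V f=c$, I would apply this bound to $r^{-1}f$ for a rational $r>c$: here $\V{r^{-1}f}<1$ and $\TO(r^{-1}f)=r^{-1}\TO(f)$ by $\Q$-homogeneity, whence $r^{-1}\V{\TO(f)}\le1$, i.e. $\V{\TO(f)}\le r$. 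Letting $r$ decrease to $c$ through the rationals yields $\V{\TO(f)}\le\V f$ for every real-valued $f$.

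For a general $f\in\CX$, the key observation is that $f\ov f=|f|^2$ is a nonnegative real-valued function with $\V{f\ov f}=\V{f}^2$. On one hand, \eqref{i2} applied with $g=\ov f$, together with $\TO(\ov f)=\ov{\TO(f)}$, gives
\[
\V{\TO(f\ov f)}
=\V{\TO(f)\TO(\ov f)}
=\V{\TO(f)\,\ov{\TO(f)}}
=\V{\TO(f)}^2.
\]
On the other hand, the real-valued case applied to $f\ov f\ge0$ gives $\V{\TO(f\ov f)}\le\V{f\ov f}=\V{f}^2$. Combining the two shows $\V{\TO(f)}^2\le\V{f}^2$, and therefore $\V{\TO(f)}\le\V f$, as required.

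I expect the only delicate point to be the passage from the bound on the unit ball to the homogeneous inequality: since $\TO$ is a priori only $\Q$-homogeneous and not $\R$-homogeneous, the scaling must be carried out with rational multipliers and a limiting argument rather than by dividing directly by $\V f$. Everything else is a straightforward combination of Lemma~\ref{lem2.3}, the norm multiplicativity \eqref{i2}, and the conjugation property of $\TO$.
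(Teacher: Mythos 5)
Your proof is correct and follows essentially the same route as the paper's: both reduce the complex case to a nonnegative function via the identity $\V{\TO(f\ov f)}=\V{\TO(f)\,\ov{\TO(f)}}=\V{\TO(f)}^2$ from \eqref{i2} and \eqref{i3}, and both then bound $\TO$ on nonnegative functions using Lemma~\ref{lem2.3} together with rational scaling. The only cosmetic difference is that the paper works with $|f|$ (showing $\V{\TO(|f|)}=\V{\TO(f)}$ and then applying Lemma~\ref{lem2.3} to $r\unitx-|f|$), whereas you work with $|f|^2=f\ov f$ and a separately stated real-valued case; both are equally valid.
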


\begin{proof}
Fix an arbitrary $f\in\CX$.
First, we prove that
$\V{\TO(|f|)}=\V{\TO(f)}$.
Because $\V{\cdot}$ is the supremum norm,
we obtain $\V{\TO(|f|)}^2=\V{\TO(|f|)^2}$.
Applying equality \eqref{i2} twice,
we have the following equalities:
\[
\V{\TO(|f|)^2}
=\V{\TO(|f|^2)}
=\V{\TO(f\ov{f})}
=\V{\TO(f)\TO(\ov{f})}.
\]
By equality \eqref{i3}, we have
$\TO(\ov{f})=\ov{\TO(f)}$,
which implies that
\[
\V{\TO(f)\TO}(\ov{f})
=\V{\TO(f)\ov{\TO(f)}}
=\V{|\TO(f)|^2}
=\V{\TO(f)}^2.
\]
Combining the previous equalities gives
$\V{\TO(|f|)}^2=\V{\TO(f)}^2$,
consequently $\V{\TO(|f|)}=\V{\TO(f)}$.

Let $r\in\Q$ be arbitrary with
$\V{f}\leq r$.
Then we have $r\unitx-|f|\geq0$ on $\X$.
Applying Lemma~\ref{lem2.3} shows that
$\TO(r\unitx-|f|)\geq0$ on $\Y$.
Because $\TO$ is additive,
we obtain the following inequalities:
\[
0\leq
\TO(r\unitx-|f|)
=\TO(r\unitx)-\TO(|f|)
=r\unity-\TO(|f|),
\]
where we have used that
$\TO(r\unitx)=r\TO(\unitx)$
and $\TO(\unitx)=\unity$.
Combining the previous equalities
with $\V{\TO(|f|)}=\V{\TO(f)}$,
we obtain the following inequalities:
\[
\V{\TO(f)}
=\V{\TO(|f|)}
\leq\V{r\unity}=r,
\]
hence $\V{\TO(f)}\leq r$.
Because $r\in\Q$ with
$\V{f}\leq r$ was chosen arbitrarily,
we conclude that $\V{\TO(f)}\leq\V{f}$.
\end{proof}

\begin{proof}[\textbf{Proof of main theorem}]
Let $\TO\colon\CX\to\CY$ be as above.
By Lemma~\ref{lem2.4}, we have that
$\TO$ is a bounded additive map.
One verifies that $\TO$ is a real-linear
map.
Therefore, we conclude that
$\TO$ is a bijective, bounded
real-linear map between Banach spaces
$\CX$ and $\CY$.
By the open mapping theorem,
we obtain that the inverse map
$\TOI\colon\CY\to\CX$ of $\TO$ is
bounded as well.
Then there exists an $M>0$ such that
$\V{\TOI(u)}\leq M\V{u}$ holds
for all $u\in\CY$.
It follows that $\V{f}\leq M\V{\TO(f)}$
for all $f\in\CX$.
Fix $f\in\CX$ arbitrarily.
We prove that $\V{f}\leq\V{\TO(f)}$.
Combining the previous inequality
with \eqref{i2},
we have the following inequalities:
\[
\V{f}^2
=\V{f^2}
\leq
M\V{\TO(f^2)}
=M\V{\TO(f)^2}
=M\V{\TO(f)}^2.
\]
By induction, we observe that
$\V{f}^{2^n}\leq M\V{\TO(f)}^{2^n}$
for all $n\in\N$.
Consequently, we obtain
$\V{f}\leq M^{1/2^n}\V{\TO(f)}$
for all $n\in\N$.
Letting $n\to\infty$ gives
$\V{f}\leq\V{\TO(f)}$ as desired.
By combining the last inequality
with Lemma~\ref{lem2.4},
we conclude that $\TO$ is
a norm preserving map.
Since $\TO$ is real-linear,
we have that $\TO$ is a surjective
real-linear isometry.
Now we can apply a result by 
Ellis \cite[Theorem]{elli},
which characterized surjective
real-linear isometries between
function algebras.
Then there exist a homeomorphism
$\tau\colon\Y\to\X$
and a closed and open subset $K$
in $\Y$ such that the following
identity holds for all $f\in\CX$
and $y\in\Y$:
\[
\TO(f)(y)=
\begin{cases}
f(\tau(y))&y\in K,\\[1mm]
\ov{f(\tau(y))}&y\in\Y\setminus K.
\end{cases}
\]
By definition, we have
$\TO(f)=T(\unitx)T(f)$
for each $f\in\CX$.
We set $u=T(\unitx)$.
Then we obtain $u\in\CRY$
with $u(Y)\subset\{\pm1\}$.
We conclude from the above
form of $\TO$ that
the following equality holds
for all $f\in\CX$ and $y\in\Y$:
\[
T(f)(y)=
\begin{cases}
u(y)f(\tau(y))&y\in K,\\[1mm]
u(y)\ov{f(\tau(y))}&y\in\Y\setminus K.
\end{cases}
\]
This completes the proof.
\end{proof}

\begin{ack}
The first author was partially supported by
JSPS KAKENHI Grant Number JP 25K07028.
\end{ack}

\end{document}